\def\overset#1#2{{\mathrel{\mathop {{#2}_{}}\limits^{#1}}}}
\def\underset#1#2{{\mathrel{\mathop {{}_{} {#2}}\limits_{{#1}_{}}}}}
\def\upplim_#1{\underset{#1}{\overline\lim}\;}
\def\lowlim_#1{\underset{#1}{\underline\lim}\;}
\newtheorem{definition}[equation]{Definition}
\newtheorem{lemma}[equation]{Lemma}
\newtheorem{theorem}[equation]{Theorem}
\newcommand{\C}{{\mathbb{C}}}
\newcommand{\N}{\mathbb{N}}
\renewcommand{\P}{{\mathbb{P}}}
\newcommand{\R}{{\mathbb{R}}}
\numberwithin{equation}{section}
\title[A generalization of the subspace theorem for higher degree polynomials]{A generalization of the subspace theorem for higher degree polynomials in subgeneral position} 
\author{Si Duc Quang}
\begin{document}

\maketitle 

\begin{abstract}
In this paper, we prove a generalization of the Schmidt's subspace theorem for polynomials of higher degree in subgeneral position with respect to a projective variety over a number field. Our result improves and generalizes the previous results on Schmidt's subspace theorem for the case of higher degree polynomials. 
\end{abstract}

\def\thefootnote{\empty}
\footnotetext{
2010 Mathematics Subject Classification:
Primary 11J68; Secondary 11J25, 11J97.\\
\hskip8pt Key words and phrases: Diophantine approximation; subspace theorem; homogeneous polynomial.}

\section{Introduction}

In Diophantine approximation, the classical theorem of Roth \cite{R} says that for an algebraic number $\alpha$ there are only finite rational numbers, whose denominators satisfy a certain bounded condition, close enough $\alpha$. This result is stated as follows.

\vskip0.2cm
\noindent
\textbf{Theorem A.} (Roth's theorem \cite{R}) {\it Let $\alpha\in\overline{Q}$ be an algebraic number. Let $\epsilon >0$. Then there are only finitely many rational number $p/q\in\mathbb Q$ satisfying
$$ \left |\alpha-\frac{p}{q}\right|<\frac{1}{q^{2+\epsilon}}.$$}

By Dirichlet's principle, the exponent $(2+\epsilon)$ is the best possible. Later on, this result had been extended to the case of  arbitrary number field instead of $\mathbb Q$ and also for a finite set of absolute values, not for only archimedean absolute values as above. We refer the readers to the works of Wirsing \cite{W} and Schmidt \cite{Sch70} for this direction. 

In another direction, Schmidt \cite[Lemma 7]{Sch72} proved an interesting generalization of the Roth's theorem to the case of higher dimension, which is so called ``subspace theorem''. There are many authors extending and generalizing this result of Schmidt.  Over the last decades much research on  the subspace theorem has been done. Many results have been given, for instance \cite{EF1,EF2,CZ,RW,L}. To state some of them, we recall the following.

Let $k$ be a number field. Denote by $M_k$ the set of places (equivalence classes of absolute values) of $k$ and by $M^\infty_k$ the set of archimedean places of $k$. For each $v\in M_k$, we choose the normalized absolute value $|\cdot |_v$ such that $|\cdot |_v=|\cdot|$ on $\mathbb Q$ (the standard absolute value) if $v$ is archimedean, and $|p|_v=p^{-1}$ if $v$ is non-archimedean and lies above the rational prime $p$. For each $v\in M_k$, denote by $k_v$ the completion of $k$ with respect to $v$ and set $$n_v := [k_v :\mathbb Q_v]/[k :\mathbb Q].$$
We put $||x||_v=|x|^{n_v}_v$. The product formula is stated as follows
$$\prod_{v\in M_k}||x||_v=1,\text{ for }  x\in k^*.$$

For $x = (x_0 ,\ldots , x_m)\in k^{m+1}$, define
$$||x||_v :=\max\{||x_0||_v,\ldots,||x_m||_v\},\ v\in M_k.$$
We define the absolute logarithmic height of a projective point $x=(x_0:\cdots :x_m)\in\P^m(k)$ by
$$h(x):=\sum_{v\in M_k}\log ||x||_v.$$
By the product formula, this does not depend on the choice of homogeneous coordinates $(x_0:\cdots :x_m)$ of $x$. If $x\in k^*$, we define the absolute logarithmic height of $x$ by
$$h(x):=\sum_{v\in M_k}\log^+ ||x||_v,$$
where $\log^+a=\log\max\{1,a\}.$

Let $\N=\{0,1,2,\ldots\}$ and for a positive integer $d$, we set
$$\mathcal T_d :=\{(i_0,\ldots, i_m)\in\N^{m+1}\ :\ i_0+\cdots +i_m=d\}.$$
Let $Q=\sum_{I\in\mathcal T_d}a_I{\bf x}^I$ be a homogeneous polynomial of degree $d$ in $k[x_0,\ldots, x_m]$, where ${\bf x}^I = x^{i_0}\ldots x^{i_m}_m$ for ${\bf x}=(x_0,\ldots, x_m)$  and $I = (i_0,\ldots,i_m)$. Define $||Q||_v =\max\{||a_I||_v; I\in\mathcal T_d\}$. The height of $Q$ is defined by
$$h(Q)=\sum_{v\in M_k}\log ||Q||_v.$$
For each $v\in M_k$, we define the Weil function $\lambda_{Q,v}$ by
$$\lambda_{Q,v}({\bf x}):=\log\frac{||{\bf x}||_v^d\cdot ||Q||_v}{||Q({\bf x})||_v},\ {\bf x}\in\P^m(k)\setminus\{Q=0\}.$$

\begin{definition}\label{def1.6}{\rm Let $V$ be an irreducible projective subvariety of $\P^m(k)$ of dimension $n$. We say that a set $\{Q_j\}^q_{j=1}\ (q\ge N+1)$ of homogeneous polynomials in $\overline{k}[x_0,\ldots, x_m]$ is in $N$-subgeneral position with respect to $V$ if for any $1\le j_0 <\cdots < j_{N+1}\le q$, 
$$ V(\overline{k})\cap\{Q_{j_0}=0\}\cap\ldots\cap\{Q_{j_N}=0\}=\varnothing.$$
where $\overline{k}$ is an algebraic closure of $k$. Moreover, if $\{Q_j\}^q_{j=1}\ (q\ge N+1)$ is in $n$-subgeneral position w.r.t. $V$ then it is said to be in general position w.r.t. $V$.}
\end{definition}

We state here a general subspace theorem given by Vojta for the case of hyperplanes of a projective space in general position (i.e., the case of linear forms in general position).

\vskip0.2cm
\noindent
\textbf{Theorem B} (see \cite[Theorem 2.2.4]{V87} and \cite[Theorem 0.3]{V89}). {\it Let $k$ be a number field, $S$ be a finite set of places of $k$ containing all archimedean places and let $H_1,\ldots, H_q$ be hyperplanes of $\P^n(k)$ in general position. Then for each $\epsilon >0$, 
$$\sum_{v\in S}\sum_{j=1}^q\lambda_{H_j,v}({\bf x})\le ( n+1+\epsilon)h({\bf x})$$
for all ${\bf x}\in\P^n(k)$ outside a union of finite proper linear subspaces.}
 
\vskip0.2cm
As we known that there is a close relation between Nevanlinna theory and Diophantine Approximation due to the works of Osgood (see \cite{Os1, Os2}) and Vojta (see \cite{V87}). Especially, Vojta has given a dictionary which provides the correspondences for the fundamental concepts of these two theories. With this dictionary, the subspace theorem in Diophantine Approximation theory will correspond to the second main theorem in Nevanlinna theory. In 1991, by using the notion of Nochka weights in Nevanlinna theory \cite{Noc83}, Ru and Wong \cite{RW} gave a generalization of Theorem B as follows.
 
\vskip0.2cm
\noindent
\textbf{Theorem C} (see \cite[Theorem 4.1]{RW}). {\it Let $k$ be a number field, $S$ be a finite set of places of $k$ containing all archimedean places and let $H_1,\ldots, H_q$ be hyperplanes in $\P^n(k)$ in $N-$subgeneral position. Then for each $\epsilon >0$, 
$$\sum_{v\in S}\sum_{j=1}^q\lambda_{H_j,v}({\bf x})\le (2N- n+1+\epsilon)h({\bf x})$$
for all ${\bf x}\in\P^n(k)$ outside a union of finite proper linear subspaces.}

Later on, Corvaja-Zannier \cite{CZ} and Evertse-Ferretti \cite{EF2} gave a breakthrough result which generalizes the result of Vojta to the case where the hyperplanes are replaced by hypersurfaces. We state their results in a simple form as follows.

\vskip0.2cm
\noindent
\textbf{Theorem D} (see \cite[Theorem 1.1]{EF2} and also \cite[Theorem 1.3]{CZ}). {\it Let $k$ be a number field, $S$ be a finite set of places of $k$ containing all archimedean places and let $V$ be an irreducible subvariety of $\P^m(k)$ of dimension $n$. Let $Q_1,\ldots, Q_q$ be homogeneous polynomials in $\overline{k}[x_0,...,x_m]$ in general position with respect to $V$. Then for each $\epsilon >0$, 
$$\sum_{v\in S}\sum_{j=1}^q\dfrac{\lambda_{Q_j,v}({\bf x})}{\deg Q_j}\le (n+1+\epsilon)h({\bf x})$$
for all ${\bf x}\in\P^n(k)$ outside a union of finite proper algebraic subvarieties.}

\vskip0.2cm
Here, we note that Corvaja and Zannier considered the above result for the case of $V=\P^n(k)$ and the general case of arbitrary subvariety $X$ is proved by Evertse and Ferretti. Also the methods of two these groups of authors are different. While Corvaja and Zannier introduced a method of filtrating the spaces of homogeneous polynomials,  Evertse and Ferretti gave a method of making use of Chow weights. 

Motivated by the analogy between Nevanlinna theory and Diophantine approximation, after establishing a degenerated second main theorem, Chen, Ru and Yan \cite{CRY} proved a Schmidt's subspace theorem for the case of hypersurface in subgeneral position as follows. 

\vskip0.2cm
\noindent 
\textbf{Theorem E} (see \cite[Theorem 1.3]{CRY})\ {\it Let $k$ be a number field, $S$ be a finite set of places of $k$ and let $V$ be an irreducible projective subvariety of $\P^m(k)$ of dimension $n$. Let $Q_1,\ldots, Q_q$ be homogeneous polynomials of $\overline{k}[x_0,\ldots,x_m]$ in general position with respect to $V$. Then for each $\epsilon >0$, 
$$\sum_{v\in S}\sum_{j=1}^q\dfrac{\lambda_{Q_j,v}({\bf x})}{\deg Q_j}\le (N(n+1)+\epsilon)h({\bf x})$$
for all ${\bf x}\in\P^m(k)$ outside a union of closed proper subvarieties of $V$.}

\vskip0.2cm 
However, while Theorem C of Ru and Wong  is a natural generalization of the classical result of Vojta, the above result of Chen, Ru and Yan can not imply the result of Corvaja- Zannier- Evertse- Ferretti. Recently, in \cite{Q16-1}, by introducing the method of ``replacing hypersurfaces'', we gave a general second main theorem in Nevanlinna theory for hypersurfaces in subgeneral position with respect to a subvariety. Adopting our technique in \cite{Q16-1}, in this paper we will prove a Schmidt's subspace theorem for homogeneous polynomials in subgeneral position with respect to a projective variety. Our main result is stated as follows.

\vskip0.2cm
\noindent 
\textbf{Main Theorem.}\ {\it Let $k$ be a number field, $S$ be a finite set of places of $k$ and let $V$ be an irreducible projective subvariety of $\P^m(k)$ of dimension $n$. Let $Q_1,\ldots, Q_q$ be homogeneous polynomials of $\overline{k}[x_0,\ldots,x_m]$ in general position with respect to $V$. Then for each $\epsilon >0$, 
$$\sum_{v\in S}\sum_{j=1}^q\dfrac{\lambda_{Q_j,v}({\bf x})}{\deg Q_j}\le ((N-n+1)(n+1)+\epsilon)h({\bf x})$$
for all ${\bf x}\in\P^m(k)$ outside a union of closed proper subvarieties of $V$.}

\vskip0.2cm 
We would like to note that, when the family of homogeneous polynomials is in general position, i.e., $N=n$, our above result will imply the Schmidt's subspace theorems of  Corvaja-Zannier-Evertse-Ferretti and also previous authors.

\section{Some auxiliary results}

In this section, we recall the notion of Chow weights and Hilbert weights from \cite{EF1,EF2}.

Let $X\subset\P^m(k)$ be a projective variety of dimension $n$ and degree $\Delta$.  We consider the unique polynomial
$$F_X(\textbf{u}_0,\ldots,\textbf{u}_k) = F_X(u_{00},\ldots,u_{0m};\ldots; u_{n0},\ldots,u_{nm})$$
in $n+1$ blocks of variables $\textbf{u}_i=(u_{i0},\ldots,u_{im}), i = 0,\ldots,k$. Then $F_X$ is called the Chow form associated to $X$ and has the following
properties: 
\begin{itemize}
\item $F_X$ is irreducible in $\C[u_{00},\ldots,u_{nm}]$,
\item $F_X$ is homogeneous of degree $\Delta$ in each block $\textbf{u}_i, i=0,\ldots,n$,
\item $F_X(\textbf{u}_0,\ldots,\textbf{u}_n) = 0$ if and only if $X\cap H_{\textbf{u}_0}\cap H_{\textbf{u}_n}\ne\varnothing$, where $H_{\textbf{u}_i}, i = 0,\ldots,n$, are the hyperplanes given by $u_{i0}x_0+\cdots+ u_{im}x_m=0.$
\end{itemize}
Let ${\bf c}=(c_0,\ldots, c_m)$ be a tuple of real numbers. Let $t$ be an auxiliary variable. We consider the decomposition
\begin{align*}
F_X(t^{c_0}u_{00},&\ldots,t^{c_m}u_{0m};\ldots ; t^{c_0}u_{n0},\ldots,t^{c_m}u_{nm})\\ 
& = t^{e_0}G_0(\textbf{u}_0,\ldots,\textbf{u}_m)+\cdots +t^{e_r}G_r(\textbf{u}_0,\ldots, \textbf{u}_m).
\end{align*}
with $G_0,\ldots,G_r\in\C[u_{00},\ldots,u_{0m};\ldots; u_{n0},\ldots,u_{nm}]$ and $e_0>e_1>\cdots>e_r$. The Chow weight of $X$ with respect to ${\bf c}$ is defined by
\begin{align*}
e_X({\bf c}):=e_0.
\end{align*}
For each subset $J = \{j_0,...,j_k\}$ of $\{0,...,n\}$ with $j_0<j_1<\cdots<j_k,$ we define the bracket
\begin{align*}
[J] = [J]({\bf u}_0,\ldots,{\bf u}_n):= \det (u_{ij_t}), i,t=0,\ldots ,k,
\end{align*}
where $\textbf{u}_i = (u_{i_0},\ldots ,u_{im})$ denotes the blocks of $m+1$ variables. Let $J_1,\ldots ,J_\beta$ with $\beta=\binom{n+1}{m+1}$ be all subsets of $\{0,...,m\}$ of cardinality $n+1$.

Then the Chow form $F_X$ of $X$ can be written as a homogeneous polynomial of degree $\Delta$ in $[J_1],\ldots,[J_\beta]$. We may see that for $\textbf{c}=(c_0,\ldots,c_m)\in\R^{m+1}$ and for any $J$ among $J_1,\ldots,J_\beta$,
\begin{align*}
[J](t^{c_0}u_{00},\ldots,t^{c_m}u_{0m},&\ldots,t^{c_0}u_{n0},\ldots,t^{c_m}u_{nm})\\
&=t\sum_{j\in J}c_j[J](u_{00},\ldots,u_{0m},\ldots,u_{n0},\ldots,u_{nm}).
\end{align*}

For $\textbf{a} = (a_0,\ldots,a_m)\in\mathbb Z^{m+1}$ we write ${\bf x}^{\bf a}$ for the monomial $x^{a_0}_0\cdots x^{a_m}_m$. Let $I=I_X$ be the prime ideal in $\C[x_0,\ldots,x_m]$ defining $X$. Let $k[x_0,\ldots,x_m]_u$ denote the vector space of homogeneous polynomials in $k[x_0,\ldots,x_m]$ of degree $u$ (including $0$). Put $I_u :=k[x_0,\ldots,x_m]_u\cap I$ and define the Hilbert function $H_X$ of $X$ by, for $u = 1, 2,...,$
\begin{align*}
H_X(u):=\dim (k[x_0,...,x_n]_u/I_u).
\end{align*}
By the usual theory of Hilbert polynomials,
\begin{align*}
H_X(u)=\Delta\cdot\frac{u^m}{m!}+O(u^{m-1}).
\end{align*}
The $u$-th Hilbert weight $S_X(u,{\bf c})$ of $X$ with respect to the tuple ${\bf c}=(c_0,\ldots,c_m)\in\mathbb R^{m+1}$ is defined by
\begin{align*}
S_X(u,{\bf c}):=\max\left (\sum_{i=1}^{H_X(u)}{\bf a}_i\cdot{\bf c}\right),
\end{align*}
where the maximum is taken over all sets of monomials ${\bf x}^{{\bf a}_1},\ldots,{\bf x}^{{\bf a}_{H_X(u)}}$ whose residue classes modulo $I$ form a basis of $k[x_0,\ldots,x_n]_u/I_u.$

The following theorems are due to Evertse and Ferretti \cite{EF1,EF2}.
\begin{theorem}[{see \cite[Theorem 4.1]{EF1}}]\label{2.12}
Let $X\subset\P^m(k)$ be an algebraic variety of dimension $n$ and degree $\Delta$. Let $u>\Delta$ be an integer and let ${\bf c}=(c_0,\ldots,c_u)\in\mathbb R^{u+1}_{\geqslant 0}$.
Then
$$ \frac{1}{uH_X(u)}S_X(u,{\bf c})\ge\frac{1}{(m+1)\Delta}e_X({\bf c})-\frac{(2m+1)\Delta}{u}\cdot\left (\max_{i=0,...,u}c_i\right). $$
\end{theorem}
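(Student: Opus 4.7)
My plan is to follow the strategy of Evertse--Ferretti, interpreting both $S_X(u,\mathbf{c})$ and $e_X(\mathbf{c})$ as weight invariants of the $1$-parameter subgroup $\lambda(t)=\operatorname{diag}(t^{c_0},\ldots,t^{c_m})$ acting diagonally on $\mathbb{P}^m$, and then comparing them through an effective Hilbert--Samuel-type asymptotic.

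By positive homogeneity of both sides in $\mathbf{c}$, together with the piecewise-linear convexity of $\mathbf{c}\mapsto S_X(u,\mathbf{c})$ and $\mathbf{c}\mapsto e_X(\mathbf{c})$, I would first reduce to $\mathbf{c}\in\mathbb Z^{m+1}_{\ge 0}$. Then, form the Gr\"obner degeneration $X\leadsto X_0$ of $X$ with respect to the weight $\mathbf{c}$, i.e.\ the subscheme of $\mathbb P^m$ cut out by the initial ideal $\operatorname{in}_{\mathbf{c}}(I_X)$. Flatness of the degeneration gives $\deg X_0=\Delta$, $H_{X_0}(u)=H_X(u)$, and $e_{X_0}(\mathbf{c})=e_X(\mathbf{c})$, while $S_{X_0}(u,\mathbf{c})\ge S_X(u,\mathbf{c})$ because any monomial basis for $k[\mathbf{x}]_u/(I_{X_0})_u$ lifts to one of $k[\mathbf{x}]_u/(I_X)_u$ of at most equal total weight. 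It therefore suffices to treat the torus-invariant case $X=X_0$, in which $k[\mathbf{x}]_u/(I_X)_u$ is canonically a direct sum of $\lambda$-eigenspaces and $S_X(u,\mathbf{c})$ is simply the total $\lambda$-weight on this quotient.

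The core analytic step is the effective estimate
$$ S_X(u,\mathbf{c})\;\ge\;\frac{e_X(\mathbf{c})\,u^{n+1}}{(n+1)!}\;-\;C(m,\Delta)\,u^{n}\,\max_i c_i \qquad (u>\Delta), $$
for an explicit constant $C(m,\Delta)$. The main term arises from integrating the linear functional $\mathbf{a}\mapsto\mathbf{a}\cdot\mathbf{c}$ over the asymptotic Newton polytope of $k[X]$; by the defining property of the Chow form (the bracket expansion recalled in the excerpt), this leading integral evaluates to $e_X(\mathbf{c})/(n+1)!$ at order $u^{n+1}$. The error measures lattice-point boundary contributions in that polytope, $O(u^{n})$ in number, each weighted by at most $\max_i c_i$. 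Once established, dividing through by $uH_X(u)$ and inserting $H_X(u)=\Delta u^{n}/n!+O(u^{n-1})$ produces the asserted inequality.

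The main obstacle is pinning the constants down so that precisely $(m+1)\Delta$ appears in the leading coefficient and $(2m+1)\Delta$ in the error prefactor. I would handle this by induction on $\dim X$, slicing $X$ with a hyperplane $H=\{x_{i_0}=0\}$ at an index $i_0$ where $c_{i_0}$ is maximal: generically this produces a lower-dimensional variety of the same degree $\Delta$ whose Chow weight $e_{X\cap H}(\mathbf{c})$ differs from $e_X(\mathbf{c})$ by at most $c_{i_0}\Delta$, and whose Hilbert function satisfies $H_X(u)-H_X(u-1)=H_{X\cap H}(u)+O(u^{n-1})$. Iterating this descent accumulates $O(m)$ corrections of size $\Delta\max_i c_i$ per step, matching the prefactor $(2m+1)\Delta$; the base case is a direct monomial count, and reassembling the telescoped estimates against the Hilbert polynomial expansion gives the theorem.
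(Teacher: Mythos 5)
The paper does not prove Theorem \ref{2.12}; it is quoted from Evertse--Ferretti \cite[Theorem 4.1]{EF1}, so there is no internal argument to compare against, and your attempt is a reconstruction of that external proof. (Incidentally, the constants as printed in the paper's statement are typos: they should be $(n+1)\Delta$ and $(2n+1)\Delta$ with $n=\dim X$, as the paper itself uses in display (\ref{3.7}). Your own derivation, dividing by $uH_X(u)=\Delta u^{n}/n!+O(u^{n-1})$, naturally produces $(n+1)\Delta$; your stated goal of matching $(m+1)\Delta$ chases a misprint.)

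Evaluated on its own, the sketch has two concrete gaps. First, the Gr\"obner reduction as written is logically reversed: you assert $S_{X_0}(u,\mathbf{c})\ge S_X(u,\mathbf{c})$ and then conclude it suffices to treat $X_0$, but with that inequality a lower bound for $S_{X_0}$ gives nothing about $S_X$. What you actually need (and what is true, indeed with equality) is $S_X(u,\mathbf{c})\ge S_{X_0}(u,\mathbf{c})$: the standard monomials of $\operatorname{in}_{\mathbf{c}}(I_X)$ form a monomial basis of $k[\mathbf{x}]_u/(I_X)_u$ of total $\mathbf{c}$-weight exactly $S_{X_0}(u,\mathbf{c})$. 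Second, the inductive slicing by the coordinate hyperplane $H=\{x_{i_0}=0\}$ is unsubstantiated at each step: nothing rules out $X\subset H$; the degree of $X\cap H$ equals $\Delta$ only for a proper intersection; and the key claim that $e_{X\cap H}(\mathbf{c})$ differs from $e_X(\mathbf{c})$ by at most $c_{i_0}\Delta$ is asserted without proof --- the Chow form of a hyperplane section is not obtained by a naive substitution in $F_X$, so this needs a real argument. The central analytic estimate $S_X(u,\mathbf{c})\ge e_X(\mathbf{c})u^{n+1}/(n+1)!-C(m,\Delta)u^n\max_i c_i$ is also stated rather than derived; Mumford's asymptotic gives the leading term as $u\to\infty$, but the theorem needs a quantitative inequality valid for every $u>\Delta$, and the ``Newton polytope integration plus $O(u^n)$ boundary terms'' heuristic does not deliver that on its own. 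As it stands, the proposal sketches the shape of an argument in the right GIT spirit but does not constitute a proof.
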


\begin{lemma}[{see \cite[Lemma 5.1]{EF2}}]\label{2.13}
Let $Y$ be a subvariety of $\P^{q-1}(k)$ of dimension $n$ and degree $\Delta$. Let ${\bf c}=(c_1,\ldots, c_q)$ be a tuple of positive reals. Let $\{i_0,...,i_n\}$ be a subset of $\{1,...,q\}$ such that
$$Y \cap \{y_{i_0}=\cdots =y_{i_n}=0\}=\varnothing.$$
Then
$$e_Y({\bf c})\ge (c_{i_0}+\cdots +c_{i_n})\Delta.$$
\end{lemma}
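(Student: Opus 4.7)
The strategy is to substitute standard basis vectors into the Chow form $F_Y$ and exploit its homogeneity in each of the $n+1$ blocks. The key observation is that the hypothesis $Y\cap\{y_{i_0}=\cdots=y_{i_n}=0\}=\varnothing$ says precisely that the $n+1$ coordinate hyperplanes $H_{\mathbf{e}_{i_0}},\ldots,H_{\mathbf{e}_{i_n}}$ do not cut $Y$ simultaneously, where $\mathbf{e}_{i_k}$ denotes the standard basis vector of $k^{q}$ with a $1$ in position $i_k$ and zeros elsewhere. By the defining property of the Chow form recalled above, this is equivalent to the assertion that $F_Y(\mathbf{e}_{i_0},\ldots,\mathbf{e}_{i_n})$ is a nonzero scalar.

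Starting from the $t$-decomposition
$$F_Y(t^{c_1}u_{01},\ldots,t^{c_q}u_{0q};\ldots;t^{c_1}u_{n1},\ldots,t^{c_q}u_{nq})=\sum_{\ell=0}^{r}t^{e_\ell}G_\ell(\mathbf{u}_0,\ldots,\mathbf{u}_n)$$
with $e_0>e_1>\cdots>e_r$ and $e_Y(\mathbf{c})=e_0$ by definition, I would specialize each block by setting $\mathbf{u}_k=\mathbf{e}_{i_k}$ for $k=0,\ldots,n$. Homogeneity of $F_Y$ of degree $\Delta$ in each block yields on the left
$$F_Y(t^{c_{i_0}}\mathbf{e}_{i_0},\ldots,t^{c_{i_n}}\mathbf{e}_{i_n})=t^{\Delta(c_{i_0}+\cdots+c_{i_n})}\cdot F_Y(\mathbf{e}_{i_0},\ldots,\mathbf{e}_{i_n}),$$
a nonzero constant times the single power $t^{\Delta(c_{i_0}+\cdots+c_{i_n})}$, while the right-hand side becomes $\sum_\ell t^{e_\ell}G_\ell(\mathbf{e}_{i_0},\ldots,\mathbf{e}_{i_n})$, a sum of pairwise distinct powers of $t$ with constant coefficients.

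Equating both expressions as functions of $t$, a single pure power on the left can match a sum of distinct powers on the right only if some index $\ell$ satisfies both $e_\ell=\Delta(c_{i_0}+\cdots+c_{i_n})$ and $G_\ell(\mathbf{e}_{i_0},\ldots,\mathbf{e}_{i_n})\neq 0$ (with all other $G_{\ell'}$ vanishing at this point). Because $e_0\ge e_\ell$ for every $\ell$, we conclude $e_Y(\mathbf{c})=e_0\ge (c_{i_0}+\cdots+c_{i_n})\Delta$, which is the desired inequality. The only step requiring any real thought is spotting that the correct specialization is by standard basis vectors indexed by the \emph{non-vanishing} coordinates $i_0,\dots,i_n$; after that, the argument is simply an unwinding of definitions combined with block-homogeneity, so I do not anticipate a genuine obstacle.
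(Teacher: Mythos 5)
Your argument is correct. The paper itself does not give a proof of this lemma---it is cited verbatim from Evertse--Ferretti \cite[Lemma 5.1]{EF2}---so there is no in-paper proof to compare against. That said, your proof is exactly the natural one and, to my recollection, is essentially the argument of Evertse--Ferretti: specialize each block $\mathbf{u}_k$ to the coordinate vector $\mathbf{e}_{i_k}$, observe via the third defining property of the Chow form that the hypothesis means $F_Y(\mathbf{e}_{i_0},\ldots,\mathbf{e}_{i_n})\neq 0$, use $\Delta$-homogeneity in each block to rewrite the left side of the $t$-decomposition as a single nonzero monomial $t^{\Delta(c_{i_0}+\cdots+c_{i_n})}F_Y(\mathbf{e}_{i_0},\ldots,\mathbf{e}_{i_n})$, and then compare exponents against the pairwise-distinct $e_\ell$ on the right side to obtain $e_0\ge\Delta(c_{i_0}+\cdots+c_{i_n})$. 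All steps check out.
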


\section{Proof of Main Theorem}
The following lemma is firstly introduced in \cite{Q16-1} for the case $k=\C$ and is reproved in \cite{Q16-2} for the general case of arbitrary number field and for even \textit{moving polynomials}.

\begin{lemma}[{see \cite[Lemma 3.1]{Q16-1} and also \cite[Lemma 3.1]{Q16-2}}]\label{3.1}
Let $V$ be an irreducible projective subvariety of $\P^m(k)$ of dimension $n$. Let $Q_1,...,Q_{N+1}\ (N\ge n)$ be homogeneous polynomials in $\overline{k}[x_0,...,x_m]$ of the same degree $d\ge 1$, such that
$$V(\overline{k})\cap\bigcap_{i=1}^{N+1}\{Q_i=0\}=\varnothing.$$
Then there exists $n$ homogeneous polynomial $P_{2},...,P_{n+1}$ of the forms
$$P_t=\sum_{j=2}^{N-n+t}c_{tj}Q_j, \ c_{tj}\in\C,\ t=2,...,n+1,$$
such that $\left (\bigcap_{t=1}^{n+1}\{P_t=0\}\right )\cap V(\overline{k})=\varnothing,$ where $P_1=Q_1$.
\end{lemma}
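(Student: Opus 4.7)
My strategy is to build $P_2,\ldots,P_{n+1}$ inductively, ensuring that the successive intersections $V_t:=V(\overline{k})\cap\{P_1=0\}\cap\cdots\cap\{P_t=0\}$ have strictly decreasing dimension. Concretely, I will maintain the invariant that every irreducible component of $V_t$ has dimension at most $n-t$; then $V_{n+1}$, having no component of non-negative dimension, is empty, which is the conclusion. For the base case $t=1$, we may assume (as is natural in the setting of the Main Theorem, where each $Q_j$ defines a proper hypersurface section of $V$) that $Q_1\not\equiv 0$ on the irreducible variety $V$, so that $V_1=V(\overline{k})\cap\{Q_1=0\}$ has pure dimension $n-1$.

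For the inductive step, suppose $P_1,\ldots,P_{t-1}$ have already been chosen, and let $W_1,\ldots,W_s$ be the irreducible components of $V_{t-1}$ of top dimension $n-t+1$. We seek $P_t=\sum_{j=2}^{N-n+t}c_{tj}Q_j$ not vanishing identically on any $W_i$. For each $i$, the tuples $(c_{tj})_j$ for which $P_t\equiv 0$ on $W_i$ form a linear subspace of $\overline{k}^{N-n+t-1}$; if each of these subspaces is proper, any choice of coefficients outside their (finite) union produces the required $P_t$. The decisive point is therefore the sublemma that for each component $W=W_i$, at least one of $Q_2,\ldots,Q_{N-n+t}$ does not vanish identically on $W$.

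I prove the sublemma by contradiction. Suppose every $Q_j$ with $2\le j\le N-n+t$ vanishes identically on $W$. Since also $W\subset V_{t-1}\subset\{P_1=0\}=\{Q_1=0\}$, we obtain $W\subset V(\overline{k})\cap\bigcap_{j=1}^{N-n+t}\{Q_j=0\}$. Intersecting both sides with the remaining hypersurfaces $\{Q_j=0\}$, $j=N-n+t+1,\ldots,N+1$, and invoking the hypothesis of the lemma gives
$$W\cap\bigcap_{j=N-n+t+1}^{N+1}\{Q_j=0\}\;\subset\;V(\overline{k})\cap\bigcap_{j=1}^{N+1}\{Q_j=0\}\;=\;\varnothing.$$
On the other hand, $W$ is an irreducible projective variety of dimension $n-t+1$ and the remaining collection contains exactly $n-t+1$ hypersurfaces in $\P^m$, so by iterated application of the projective dimension theorem every irreducible component of this intersection has dimension at least $(n-t+1)-(n-t+1)=0$; in particular the intersection is non-empty, a contradiction. (The boundary case $t=n+1$ is trivial: the remaining collection is empty, $W$ is a point, and $W\subset V(\overline{k})\cap\bigcap_{j=1}^{N+1}\{Q_j=0\}=\varnothing$ is impossible.)

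The main obstacle is the arithmetic of indices: the \emph{used} polynomials $Q_1,\ldots,Q_{N-n+t}$ and the \emph{unused} polynomials $Q_{N-n+t+1},\ldots,Q_{N+1}$ together exhaust $Q_1,\ldots,Q_{N+1}$, and the unused count $n-t+1$ must coincide exactly with $\dim W$ so that the projective dimension theorem collides with the emptiness hypothesis. This is precisely why $P_t$ is a combination of $Q_j$ for $2\le j\le N-n+t$; the assumption $N\ge n$ guarantees that this index range is non-empty for each $t\in\{2,\ldots,n+1\}$.
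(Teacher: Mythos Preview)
The paper does not actually prove Lemma~3.1; it merely cites \cite{Q16-1} and \cite{Q16-2} and proceeds directly to the proof of the Main Theorem. Your inductive dimension-drop argument is precisely the argument used in those references: build $P_t$ one at a time, keep $\dim V_t\le n-t$, and at each step show that some $Q_j$ with $2\le j\le N-n+t$ is nonzero on every top-dimensional component of $V_{t-1}$ by counting that exactly $n-t+1$ of the $Q_j$ remain unused and invoking the projective dimension theorem against the hypothesis $V(\overline{k})\cap\bigcap_j\{Q_j=0\}=\varnothing$. So your approach coincides with the intended one.

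Two small remarks. First, your added hypothesis $Q_1\not\equiv 0$ on $V$ is not just ``natural'' but \emph{necessary}: if $Q_1$ vanished identically on $V$ then $\{P_1=0\}\cap V=V$ has dimension $n$, and no choice of $n$ further hypersurface sections $P_2,\ldots,P_{n+1}$ can make the intersection empty, so the lemma as literally stated would be false. In the application each $Q_j$ cuts $V$ properly, so this is harmless, but it is worth saying explicitly rather than parenthetically. Second, the avoidance step (``any choice of coefficients outside their finite union'') uses that $\overline{k}$ is infinite; this is of course true, and the resulting $c_{tj}$ lie in $\overline{k}\subset\C$, consistent with the statement.
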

\begin{proof}[{\bf Proof of Main Theorem}]
Let $d_i=\deg Q_i\ (1\le i\le q)$ and let $d$ be the least common multiple of $d_i$'s, i.e., $d=l.c.m(d_1,\ldots,d_q)$. Replacing $Q_i$ by $Q_i^{d/d_i}$ if necessary, without loss of generality we may assume that all $Q_i$ are of the same degree $d$. We may also assume that $q>(N-n+1)(n+1)$. 

For a given $v\in S$ and for a fixed point ${\bf x}\in V(k)$, there exists a permutation $(l_{1,v,{\bf x}},...,i_{q,v,{\bf x}})$ of $\{1,2,\ldots,q\}$ such that
$$||Q_{l_{1,v,{\bf x}}}({\bf x})||_v\le ||Q_{l_{2,v,{\bf x}}}({\bf x})||_v\le\cdots\le ||Q_{l_{q,v,{\bf x}}}({\bf x})||_v.$$
We denote by $P_{1,v,{\bf x}},...,P_{n+1,v,{\bf x}}$ the homogeneous polynomials obtained in Lemma \ref{3.1} with respect to $Q_{l_{1,v,{\bf x}}},...,Q_{l_{n+1,v,{\bf x}}}$. It is easy to see that there exists a positive constant $c_1\ge 1$, which is chosen common for all $v\in S,\ {\bf x}\in V(k)$ and $1\le i\le n+1$, such that
$$ ||P_{i,v,{\bf x}}({\bf x})||_v\le c_1\max_{1\le j\le N-n+t}||Q_{l_{j,v,{\bf x}}}({\bf x})||_v, $$
for all $1\le t\le n+1$ and for all ${\bf x}=(x_0:\cdots :x_n)\in V(k)$. 

Since $Q_{1},\ldots,Q_{q}$ are in $N-$subgeneral position w.r.t. $V$, from the compactness of $V$ and the finiteness of $S$ there exists a positive constant $A$ such that
$$ ||{\bf x}||_v^d\le c_2\max_{1\le j\le N+1}||Q_{l_{j,v,{\bf x}}}({\bf x})||_v$$
for all $v\in S$ and ${\bf x}\in V(k)$. Therefore, we have
\begin{align*}
\prod_{i=1}^q\dfrac{||{\bf x}||_v^d}{||Q_i({\bf x})||_v}&\le c_2^{q-N-1}\prod_{j=1}^{N+1}\dfrac{||{\bf x}||_v^d}{||Q_{l_{j,v,{\bf x}}}({\bf x})||_v}\\
&\le c_2^{q-N-1}c_1^{k}\dfrac{||{\bf x}||_v^{(N+1)d}}{\bigl (\prod_{j=1}^{N+1-k}||Q_{l_{j,v,{\bf x}}}({\bf x})||_v\bigl )\cdot\prod_{j=2}^{n+1}||P_{j,v,{\bf x}}({\bf x})||_v}\\
&\le c_2^{q-N-1}c_1^{k}\dfrac{||{\bf x}||_v^{(N+1)d}}{||P_{1,v,{\bf x}}({\bf x})||_v^{N-n+1}\cdot\prod_{j=2}^{n+1}||P_{j,v,{\bf x}}({\bf x})||_v}
\end{align*}
\begin{align*}
&\le c_2^{q-N-1}c_1^{k}c_3^{(N-n)}\dfrac{||{\bf x}||_v^{(N+1)d+(N-n)kd}}{\prod_{j=1}^{n+1}||P_{j,v,{\bf x}}({\bf x})||_v^{N-n+1}},
\end{align*}
where $c_3$ is a positive constant, which is chosen common for all $v\in S$, such that 
$$||P_{j,v,{\bf x}}({\bf x})||_v\le c_3||{\bf x}||_v^d, \ \forall {\bf x}\in V(k).$$
The above inequality implies that
\begin{align}\label{3.2}
\log \prod_{i=1}^q\dfrac{||{\bf x}||_v^d}{||Q_i({\bf x})||_v}\le (N-n+1)\log \dfrac{||{\bf x}||_v^{(n+1)d}}{\prod_{j=1}^{n+1}||P_{j,v,{\bf x}}({\bf x})||_v}+c_4,\ \forall {\bf x}\in V(k),
\end{align}
where $c_4$ is a constant depending only on $Q_i\ (1\le i\le q)$.

We denote by $\mathcal I$ the set of all permutations of $\{1,....,q\}$. Denote by $n_0$ the cardinality of $\mathcal I$. Then we have $n_0=q!$, and we may write that
$\mathcal I=\{I_1,....,I_{n_0}\}$
where $I_i=(l_{i,1},...l_{i,q})\in\mathbb N^q$ and $I_1<I_2<\cdots <I_{n_0}$ in the lexicographic order.
For each $I_i\in\mathcal I$, we denote by $P_{i,1},...,P_{i,{n+1}}$ the homogeneous polynomials obtained in Lemma \ref{3.1} with respect to  $Q_{l_{i,1}},...,Q_{l_{i,N+1}}$.

We consider the mapping $\Phi$ from $V$ into $\P^{l-1}(k)\ (l=n_0(n+1))$, which maps a point ${\bf x}\in V$ into the point $\Phi({\bf x})\in\P^{l-1}(k)$ given by
$$\Phi({\bf x})=(P_{1,1}({\bf x}):\cdots : P_{1,n+1}({\bf x}):P_{2,1}({\bf x}):\cdots:P_{2,n+1}({\bf x}):\cdots:P_{n_0,1}({\bf x}):\cdots :P_{n_0,n+1}({\bf x})).$$ 
Let $Y=\Phi (V)$. Since $V(\overline{k})\cap\bigcap_{j=1}^{n+1}\{P_{1,j}=0\}=\varnothing$, $\Phi$ is a finite morphism on $V$ and $Y$ is a subvariety of $\P^{l-1}(k)$ with $\dim Y=n$ and $\Delta:=\deg Y=\le d^n.\deg V$. 
For every 
$${\bf a} = (a_{1,1},\ldots ,a_{1,n+1},a_{2,1}\ldots,a_{2,n+1},\ldots,a_{n_0,1},\ldots,a_{n_0,n+1})\in\mathbb Z^l_{\ge 0}$$ 
and
$${\bf y} = (y_{1,1},\ldots ,y_{1,n+1},y_{2,1}\ldots,y_{2,n+1},\ldots,y_{n_0,1},\ldots,y_{n_0,n+1})$$ 
we denote ${\bf y}^{\bf a} = y_{1,1}^{a_{1,1}}\ldots y_{1,n+1}^{a_{1,n+1}}\ldots y_{n_0,1}^{a_{n_0,1}}\ldots y_{n_0,n+1}^{a_{n_0,n+1}}$. Let $u$ be a positive integer. We set
\begin{align}\label{3.3}
n_u:=H_Y(u)-1,\ l_u:=\binom{l+u-1}{u}-1,
\end{align}
and define the space
$$ Y_u=k[y_1,\ldots,y_l]_u/(I_Y)_u, $$
which is a vector space of dimension $n_u+1$. We fix a basis $\{v_0,\ldots, v_{n_u}\}$ of $Y_u$ and consider the meromorphic mapping $F$ from $V$ into $\P^{n_u}(k)$ defined by
$$ F({\bf x})=(v_0(\Phi({\bf x})):\cdots :v_{n_u}(\Phi({\bf x}))),\ \ {\bf x}\in V.$$

Fix ${\bf x}\in V$. For a given $v\in S$, without loss of generality we may assume that
$$ ||Q_{l_{1,1}}({\bf x})||_v\le ||Q_{l_{1,2}}({\bf x})||_v\le\cdots\le ||Q_{l_{1,q}}({\bf x})||_v. $$
We define 
$${\bf c}_{v,{\bf x}} = (c_{1,1},\ldots,c_{1,n+1},c_{2,1},\ldots,c_{2,n+1},\ldots,c_{n_0,1},\ldots,c_{n_0,n+1})\in\mathbb R^{l},$$ 
where
\begin{align}\label{3.4}
c_{i,j}:=\log\frac{||{\bf x}||_v^d\cdot||P_{i,j}||_v}{||P_{i,j}({\bf x})||_v}\text{ for } i=1,\ldots ,n_0 \text{ and }j=1,\ldots ,n+1.
\end{align}
We see that $c_{i,j}\ge 0$ for all $v,{\bf x}$ and $j$. By the definition of the Hilbert weight, there are ${\bf a}_{1},\ldots ,{\bf a}_{H_Y(u)}\in\mathbb N^{l}$ with
$$ {\bf a}_{i}=(a_{i,1,1},\ldots,a_{i,1,n+1},\ldots,a_{i,n_0,1},\ldots,a_{i,n_0,n+1}), a_{i,j,s}\in\{1,...,l_u\}, $$
 such that the residue classes modulo $(I_Y)_u$ of ${\bf y}^{{\bf a}_{1}},\ldots ,{\bf y}^{{\bf a}_{H_Y(u)}}$ form a basic of $Y_u$ and
\begin{align}\label{3.5}
S_Y(u,{\bf c}_{v,{\bf x}})=\sum_{i=1}^{H_Y(u)}{\bf a}_{i}\cdot{\bf c}_{v,{\bf x}}.
\end{align}
We see that ${\bf y}^{{\bf a}_{i}}\in Y_m$ (modulo $(I_Y)_m$). Then we may write
$$ {\bf y}^{{\bf a}_{i}}=L_{i,v,{\bf x}}(v_0,\ldots ,v_{H_Y(u)}), $$ 
where $L_{i,v,{\bf x}}\ (1\le i\le H_Y(u))$ are independent linear forms.
We have
\begin{align*}
\log\prod_{i=1}^{H_Y(u)} ||L_{i,v,{\bf x}}(F({\bf x}))||_v&=\log\prod_{i=1}^{H_Y(u)}\prod_{\overset{1\le i\le n_0}{1\le j\le n+1}}||P_{i,j}({\bf x})||_v^{a_{i,j}}\\
&=-S_Y(m,{\bf c}_{v,{\bf x}})+duH_Y(u)\log ||{\bf x}||_v +O(uH_Y(u)).
\end{align*}
This implies that
\begin{align*}
\log\prod_{i=1}^{H_Y(u)}\dfrac{||F({\bf x})||_v\cdot ||L_{i,v,{\bf x}}||_v}{||L_{i,v,{\bf x}}(F({\bf x}))||_v}=&S_Y(u,{\bf c}_{v,{\bf x}})-duH_Y(u)\log ||{\bf x}||_v \\
&+H_Y(u)\log ||F({\bf x})||_v+O(uH_Y(u)).
\end{align*}
We note that $L_{i,v,{\bf x}}$ depends on $i$ and ${\bf x}$, but the number of these linear forms is finite. We denote by $\mathcal L_v$ the set of all $L_{i,v,{\bf x}}$ occuring in the above inequalities (when $i$ and ${\bf x}$ vary). Then we have
\begin{align}\label{3.6}
\begin{split}
S_Y(u,{\bf c}_{v,{\bf x}})\le&\max_{\mathcal J\subset\mathcal L_v}\log\prod_{L\in \mathcal J}\dfrac{||F({\bf x})||_v\cdot ||L||_v}{||L(F({\bf x}))||_v}+duH_Y(u)\log ||{\bf x}||_v\\
& -H_Y(u)\log ||F({\bf x})||_v+O(uH_Y(u)),
\end{split}
\end{align}
where the maximum is taken over all subsets $\mathcal J\subset\mathcal L_v$ with $\sharp\mathcal J=H_Y(u)$ and $\{L;L\in\mathcal J\}$ is linearly independent.
From Theorem \ref{2.12} we have
\begin{align}\label{3.7}
\dfrac{1}{uH_Y(u)}S_Y(u,{\bf c}_{v,{\bf x}})\ge&\frac{1}{(n+1)\Delta}e_Y({\bf c}_{v,{\bf x}})-\frac{(2n+1)\Delta}{u}\max_{\underset{1\le i\le n_0}{1\le j\le n+1}}c_{i,j}
\end{align}
It is clear that
\begin{align*}
\max_{\underset{1\le i\le n_0}{1\le j\le n+1}}c_{i,j}\le \sum_{1\le j\le n+1}\log\frac{||{\bf x}||_v^d\cdot||P_{1,j}||_v}{||P_{1,j}({\bf x})||_v}+c_4,
\end{align*}
where $c_4$ is a constant not depending on ${\bf x}$ and $v$. Combining (\ref{3.6}), (\ref{3.7}) and the above remark, we get
\begin{align}\nonumber
\frac{1}{(n+1)\Delta}e_Y({\bf c}_{v,{\bf x}})\le &\dfrac{1}{uH_Y(u)}\left (\max_{\mathcal J\subset\mathcal L_v}\log\prod_{L\in \mathcal J}\dfrac{||F({\bf x})||_v\cdot ||L||_v}{||L(F({\bf x}))||_v}-H_Y(u)\log ||F({\bf x})||_v\right )\\
\label{3.8}
\begin{split}
&+d\log ||{\bf x}||_v+\frac{(2m+1)\Delta}{n}\max_{\underset{1\le i\le n_0}{1\le j\le n+1}}c_{i,j}+O(1/n)\\
\le &\dfrac{1}{uH_Y(u)}\left (\max_{\mathcal J\subset\mathcal L_v}\prod_{L\in\mathcal J}\dfrac{||F({\bf x})||_v\cdot ||L||_v}{||L(F({\bf x}))||_v}-H_Y(u)\log ||F({\bf x})||_v\right )\\
&+d\log ||{\bf x}||_v+\frac{(2n+1)\Delta}{m}\sum_{1\le j\le k}\log\frac{||{\bf x}||_v^d\cdot||P_{1,j}||_v}{||P_{1,j}({\bf x})||_v}+O(1/n).
\end{split}
\end{align}
Since $P_{1,1},\ldots,P_{1,n+1}$ are in general with respect to $X$, By Lemma \ref{2.13}, we have
\begin{align}\label{3.9}
e_Y({\bf c}_{v,{\bf x}})\ge (c_{1,1}+\cdots +c_{1,n+1})\cdot\Delta =\left (\sum_{1\le j\le k}\log\frac{||{\bf x}||_v^d\cdot||P_{1,j}||_v}{||P_{1,j}({\bf x})||_v}\right )\cdot\Delta.
\end{align}
Then, from (\ref{3.2}), (\ref{3.8}) and (\ref{3.9}) we have
\begin{align}\label{3.10}
\begin{split}
\frac{1}{N-n+1}&\left (\log \prod_{i=1}^q\dfrac{||{\bf x}||_v^d}{||Q_i({\bf x})||_v}+O(1)\right )\\
&\le\dfrac{n+1}{uH_Y(u)}\left (\max_{\mathcal J\subset\mathcal L_v}\log\prod_{L\in\mathcal J}\dfrac{||F({\bf x})||_v\cdot ||L||_v}{||L(F({\bf x}))||_v}-H_Y(u)\log ||F({\bf x})||_v\right )\\
&+d(n+1)\log ||{\bf x}||_v+\frac{(2n+1)(n+1)\Delta}{u}\sum_{\underset{1\le i\le n_0}{1\le j\le n+1}}\log\frac{||{\bf x}||_v^d\cdot||P_{i,j}||_v}{||P_{i,j}({\bf x})||_v},
\end{split}
\end{align}
where the term $O(1)$ does not depend on ${\bf x}$. 

Summing-up both sides of the above inequalities over all $v\in S$, we obtain
\begin{align}\nonumber
\frac{1}{N-n+1}\sum_{v\in S}\sum_{j=1}^q\lambda_{Q_j,v}(x)\le &\dfrac{n+1}{uH_Y(u)}\sum_{v\in S}\max_{\mathcal J\subset\mathcal L_v}\log\prod_{L\in\mathcal J}\dfrac{||F({\bf x})||_v\cdot ||L||_v}{||L(F({\bf x}))||_v}\\
\label{3.11}
\begin{split}
&-\dfrac{n+1}{u}\sum_{v\in S}\log ||F({\bf x})||_v+d(n+1)\sum_{v\in S}\log ||{\bf x}||_v\\
&+\frac{(2n+1)(n+1)\Delta}{u}\sum_{\underset{1\le i\le n_0}{1\le j\le n+1}}\sum_{v\in S}\log\frac{||{\bf x}||_v^d\cdot||P_{i,j}||_v}{||P_{i,j}({\bf x})||_v}+O(1).
\end{split}
\end{align} 
Since the above inequality does not depend on the choice of components of ${\bf x}$, we may assume that all components of ${\bf x}$ are $S$-integer points and all coefficients of $F$ are $S$-integer points. Then we have
$$ \sum_{v\in S}\log ||{\bf x}||_v=h({\bf x}) \ \text{ and } \sum_{v\in S}\log ||F({\bf x})||_v=h(F({\bf x})).$$
We also have
$$ \sum_{v\in S}\log\frac{||{\bf x}||_v^d\cdot||P_{i,j}||_v}{||P_{i,j}({\bf x})||_v}\le \sum_{v\in M(k)}\log\frac{||{\bf x}||_v^d\cdot||P_{i,j}||_v}{||P_{i,j}({\bf x})||_v}=dh({\bf x}).$$
On the other hand by the subspace theorem due to Schlickewei \cite{Sch77} (see also Schmidt \cite[Theorem 3]{Sch75}), for every $\epsilon' >0$ and all $F({\bf x})$ outside a finite union of proper linear subspaces, we have
\begin{align*}
\sum_{v\in S}\max_{\mathcal J\subset\mathcal L_v}\log\prod_{L\in\mathcal J}\dfrac{||F({\bf x})||_v\cdot ||L||_v}{||L(F({\bf x}))||_v}\le (H_Y(u)+\epsilon') h(F({\bf x})).
\end{align*}
Combining this inequality with (\ref{3.11}), we have
\begin{align}\label{3.12}
\begin{split}
\frac{1}{N-n+1}\sum_{v\in S}\sum_{j=1}^q\lambda_{Q_j,v}({\bf x})&\le \dfrac{\epsilon'(n+1)}{uH_Y(u)}h(F({\bf x}))\\
&+\left(d(n+1)+\frac{(2n+1)(n+1)l\Delta}{u}\right)h({\bf x})+O(1),
\end{split}
\end{align}
for all ${\bf x}\in V$ outside a union of finite proper algebraic subsets.

We note that $h(F({\bf x}))\le uh({\bf x})$. Then taking
$$ u\ge 4(N-n+1)(2n+1)(n+1)l\Delta \epsilon^{-1} \ \text{ and } \epsilon'\le \frac{dH_Y(u)\epsilon}{4(N-n+1)(n+1)},$$
from (\ref{3.12}) we get
$$ \sum_{v\in S}\sum_{j=1}^q\dfrac{\lambda_{Q_j,v}({\bf x})}{d}\le  \left((N-n+1)(n+1)+\frac{\epsilon}{2}\right)h({\bf x})+O(1),$$
for all ${\bf x}\in V$ outside a union of finite proper algebraic subsets.
Since the height function $h({\bf x})$ is unbounded and there only finite points ${\bf x}\in V$ such that $h({\bf x})$ is bounded above by a certain positive constant, if we replace $\frac{\epsilon}{2}$ by $\epsilon$ then the term $O(1)$ in the above inequality can be absorbed. The theorem is proved.
\end{proof}

%\noindent
%{\bf Acknowledgements.} This research is funded by Vietnam National Foundation for Science and Technology Development (NAFOSTED) under grant number 101.04-2015.03.

\vskip0.2cm
{\footnotesize 
\noindent
{\sc Si Duc Quang}\\
Department of Mathematics,\\
Hanoi National University of Education,\\
136-Xuan Thuy, Cau Giay, Hanoi, Vietnam.\\
\textit{E-mail}: quangsd@hnue.edu.vn

\end{document}